\documentclass[12pt,reqno]{amsart}


\headheight=6.15pt \textheight=8.75in \textwidth=6.5in
\oddsidemargin=0in \evensidemargin=0in \topmargin=0in

\usepackage{latexsym}
\usepackage{amssymb}
\usepackage{graphics}
\usepackage{graphicx}
\usepackage{color}

\renewcommand{\epsilon}{\varepsilon}
\newcommand{\dist}{{\operatorname{dist}}}

\newcommand{\kahler}{K\"ahler }

\newcommand{\PP}{{\mathbb P}}

\newcommand{\R}{{\mathbb R}}
\newcommand{\C}{{\mathbb C}}

\newcommand{\CP}{\C\PP}

\newcommand{\dbar}{\bar\partial}
\newcommand{\ddbar}{\partial\dbar}

\newcommand{\E}{{\mathbf E}}

\newcommand{\Zb}{{\mathbf Z}}

\newcommand{\epk}{\epsilon(k)}

\renewcommand{\phi}{\varphi}

\newcommand{\hcal}{\mathcal{H}}

\newcommand{\ocal}{\mathcal{O}}

\newtheorem{theo}{{Theorem}}[section]
\newtheorem{cor}[theo]{{Corollary}}
\newtheorem{lem}[theo]{{Lemma}}
\newtheorem{prop}[theo]{{Proposition}}

\newenvironment{rem}{\medskip\noindent{\it Remark:\/} }{\medskip}

\title[partial density function]{asymptotics of partial density function vanishing along smooth subvariety}

\author{Jingzhou Sun }
\address{Department of Mathematics, Shantou University, Shantou City, Guangdong
	515063,China} \email{jzsun@stu.edu.cn}
\thanks{The author is partially supported by NNSF of China no.11701353 and the STU Scientific Research Foundation for Talents no.130/760181.}

\date{\today}

\begin{document}
\begin{abstract}
	We study the asymptotic of the partial density function associated to holomorphic section of a postive line bundle vanishing to high orders along a fixed smooth subvariety. Assuming local torus-action-invariance, we describe the forbidden region, generalizing the result of Ross-Singer. 
\end{abstract}

\maketitle

\tableofcontents
	\section{Introduction}	
Let $(X,L)$ be a polarized compact \kahler manifold of dimension $m$. We endow $L$ with a Hermitian metric $h$ with positive curvature. And we use $\omega=\frac{i}{2}\Theta_h$ as the \kahler form. By abuse of notation, we still use $h$ to denote the induced metric on the $k$-th power $L^k$. Then we have a Hermitian inner product on $H^0(X,L^k)$, defined by
$$<s_1,s_2>=\int_X h(s_1,s_2)\frac{\omega^m}{m!}$$
Let $\{s_i\}$ be an orthonormal basis of $\hcal_k=H^0(X,L^k)$. Then the  on-diagonal Bergman kernel, also called density of states function, is defined as
$$\rho_k(z)=\sum |s_i(z)|_h^2.$$

 The  on-diagonal Bergman kernel $\rho_k$ has very nice asymptotic expansion by
the results of Tian, Zelditch, Lu, etc. \cite{Tian1990On, Zelditch2000Szego, Lu2000On, Catlin, MM}. The asymptotic expansion has been found very useful, making $\rho_k$ an important bridge connecting \kahler geometry to algebraic geometry (see for example \cite{donaldson2001,DonaldsonSun2}).

Let $V\subset X$ be a closed submanifold of dimension $n$.
Let $\delta>0$ be a small number. We denote by $\hcal_k^{\delta k}\subset \hcal_k$ the subspace consisting of holomorphic sections whose vanishing orders along $V$ are no less than $\delta k$. The partial density function $\rho_k^\delta$ is then defined as the density of states function for $\hcal_k^{\delta k}$: 
$$\rho_k^\delta(z)=\sum |s_i(z)|^2_h ,$$
	where $\{s_i\}$ is an orthonormal basis of $\hcal_k^{\delta k}$. When $V$ is a divisor,  the partial density functions have been studied by Zelditch-Zhou \cite{ZelditchZhou-Interface,ZelditchZhou2019}, Ross-Singer\cite{Ross2017} and Coman-Marinescu \cite{CM17} etc. It was first shown by Shiffman-Zelditch \cite{SZ2004} in the toric case, that there is a `forbidden region' in which the partial density function is exponentially small. Berman \cite{Berman2007} showed that there is an open forbidden region $R$ that asymptotically the partial density function is exponentially small on compact subsets and is equal to the usual density function on compact subsets of the complement of $\bar{R}$.  
		In particular,  Ross-Singer\cite{Ross2017} studied the case when $V$ is smooth divisor, plus the condition that the line bundle and the metric is $S^1$-symmetric in a neighborhood $U$ of $V$. Let $\mu:U\to \R$ be the Hamiltonian of the $S^1$-action, normalized so that $\mu^{-1}(0)=V$. Then Ross-Singer proved a distributional asymptotic expansion for the partial density function and showed that the forbidden region is $\mu^{-1}[0,\delta)$:
	\begin{theo}[\cite{Ross2017}]
		For sufficiently small $\delta$, we have
		$$\rho_{k}^\delta \sim \left\{
		\begin{array}{lcl}
		O(k^{-\infty})& &\text{on } \mu^{-1}[0,\delta) \\
		\rho_k+O(k^{-\infty})& & \text{on } X\backslash \mu^{-1}[0,\delta]
		\end{array}
		\right.,$$
		where $\sim$ means  the equality holds on any given compact subset of $\mu^{-1}[0,\delta)$ and $X\backslash \mu^{-1}[0,\delta]$ respectively. 
	\end{theo}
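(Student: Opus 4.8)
\medskip
\noindent\textbf{Proof strategy.}
The plan is to combine the extremal characterization of Bergman kernels with a reduction to an explicit $S^1$-graded model near $V$. For every $z$, $\rho_k(z)=\max\{|s(z)|_h^2: s\in\hcal_k,\ \|s\|=1\}$, and likewise $\rho_k^\delta(z)=\max\{|s(z)|_h^2: s\in\hcal_k^{\delta k},\ \|s\|=1\}$; since $\hcal_k^{\delta k}\subseteq\hcal_k$ this already gives $\rho_k^\delta\le\rho_k$ everywhere, so it remains to prove an exponential \emph{upper} bound for $\rho_k^\delta$ on compact subsets of $\mu^{-1}[0,\delta)$ and a matching \emph{lower} bound, accurate to $O(k^{-\infty})$, on compact subsets of $X\setminus\mu^{-1}[0,\delta]$. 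Both statements are local near $V$. So I would fix $S^1$-invariant normal coordinates $(z',w)$ on the symmetric tube $U$, with $V=\{w=0\}$ and, after rescaling $w$, $\mu<1$ on $\{|w|^2\le1\}$ and $\{|w|^2\le4\}\Subset U$; trivialize $(L,h)$ over $U$ by an $S^1$-invariant weight $\phi(z',|w|^2)$; and use that $\mu$ is a strictly increasing function of $|w|^2$ vanishing along $V$. The $S^1$-action makes the holomorphic sections near $V$ (and on a compact comparison manifold agreeing with $X$ on a tube about $V$ and carrying a global $S^1$-action) split orthogonally into weight subspaces, the weight-$j$ piece being generated by $w^j g$ with $g\in H^0(V,L^k|_V\otimes(N_V^*)^{\otimes j})$, with squared norm comparable to $\int_V\|g\|_h^2\,I_j(z')\,\omega_V^n/n!$, where $I_j(z')=\int_0^R r^{2j}e^{-k\phi(z',r^2)}\theta(z',r)\,dr$ for a smooth positive $\theta$.

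\medskip
\noindent\textbf{A one-dimensional Laplace estimate.}
The key local input is the asymptotics of $I_j$: its maximum occurs at the radius with $\mu=j/k$ and is nondegenerate because $\phi$ is strictly plurisubharmonic in $w$, so Laplace's method gives, uniformly for $z$ in compacts of the tube and $j/k$ in compacts of $(0,\infty)$, a bound for the weight-$j$ reproducing kernel $K^{(j)}(z,z)\le Ck^{a}e^{-k\Psi_{j/k}(\mu(z))}$, where $\Psi_\tau\ge 0$, $\Psi_\tau(\tau)=0$, and $\Psi_\tau(t)$ increases strictly as $\tau$ moves away from $t$ (indeed $\partial_\tau\Psi_\tau(t)=\log(r_*(\tau)/r_*(t))$, with $r_*(\cdot)$ the radius at which $\mu$ takes the given value). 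I would combine this with the localization principle for Bergman kernels, so that on compacts of the tube $\rho_k(z)=\sum_{j\ge0}K^{(j)}(z,z)+O(k^{-\infty})$.

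\medskip
\noindent\textbf{Forbidden and allowed regions.}
On a compact $\mathcal K\subseteq\mu^{-1}[0,\delta)$ one has $\mu(z)\le\delta-\eta$ for some $\eta>0$. A unit $s\in\hcal_k^{\delta k}$ restricts near $V$ to a sum of weights $\ge\delta k$, so Cauchy--Schwarz and orthogonality give $|s(z)|_h^2\le\big(\sum_{j\ge\delta k}K^{(j)}(z,z)\big)\|s\|^2\le Ck^{a+1}e^{-k\Psi_\delta(\mu(z))}\|s\|^2$, since $j/k\ge\delta>\mu(z)$ forces $\Psi_{j/k}(\mu(z))\ge\Psi_\delta(\mu(z))\ge c(\eta)>0$; hence $\rho_k^\delta=O(k^{-\infty})$ on $\mathcal K$. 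For the allowed region, take $z$ near $V$ with $\delta+\eta\le\mu(z)\le1$: then $\sum_{j<\delta k}K^{(j)}(z,z)\le Ck^{a+1}e^{-k\Psi_\delta(\mu(z))}=O(k^{-\infty})$, so on the comparison manifold the coherent state at $z$ differs by $O(k^{-\infty})$ in $L^2$ from its projection $\sigma'$ onto the weights $\ge\delta k$; this $\sigma'$ vanishes to order $\ge\delta k$ along $V$, is exponentially concentrated at $z$, and has $|\sigma'(z)|_h^2/\|\sigma'\|^2=\rho_k(z)+O(k^{-\infty})$. I would transfer $\sigma'$ to $X$ by cutting it off near the edge of the matching tube, where $|w|>1$, and solving $\dbar u=\dbar\chi\cdot\sigma'$ by H\"ormander's $L^2$-estimate for the weight $k\phi+2(\delta k+m)\log|w|$: its curvature still dominates $2k\omega$, and the cutoff term — supported where $|w|>1$, hence where the singular weight is bounded — has $L^2$-norm $O(k^{-\infty})$, so the solution $u$ is $O(k^{-\infty})$ and vanishes to order $\ge\delta k$ along $V$. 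Then $\sigma:=\chi\sigma'-u\in\hcal_k^{\delta k}$ has $|\sigma(z)|_h^2/\|\sigma\|^2=\rho_k(z)+O(k^{-\infty})$, giving $\rho_k^\delta(z)\ge\rho_k(z)-O(k^{-\infty})$. For $z$ with $\mu(z)$ large, or $z\notin U$, I would instead cut off the global coherent state $s_z$ near $V$; then the source of the $\dbar$-equation lies at distance bounded below from $z$, so $s_z$ is exponentially small there, and for $\delta$ small this decay beats the now-adverse singular-weight factor. With $\rho_k^\delta\le\rho_k$ this gives $\rho_k^\delta=\rho_k+O(k^{-\infty})$ on compacts of $X\setminus\mu^{-1}[0,\delta]$.

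\medskip
\noindent\textbf{Main obstacle.}
The technical core is the interface between the exact $S^1$-graded model and the actual manifold: making the one-dimensional Laplace estimate uniform in the base point $z'\in V$ and in the slope $j/k$; setting up the comparison manifold and the $O(k^{-\infty})$ localization (the contribution of the far fixed locus being harmless but not entirely trivial to dispose of); and, above all, arranging the $\dbar$-correction so that the glued section truly lies in $\hcal_k^{\delta k}$ — with \emph{exact} vanishing order $\ge\delta k$, not merely ``small near $V$'' — while the change in norm remains $O(k^{-\infty})$. It is precisely the $S^1$-invariance of $(L,h)$ near $V$ that makes the weight grading orthogonal and the model exactly solvable; rescaling the normal coordinate so the cutoff can be placed where $|w|>1$ is what prevents the singular weight from overwhelming the exponential smallness; and the smallness of $\delta$ serves both to keep $\mu^{-1}[0,\delta)$ inside the symmetric tube and to win the remaining competition of exponents.
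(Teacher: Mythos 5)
Your proposal is correct in outline and shares the paper's basic architecture (torus/$S^1$-weight decomposition near $V$, a one-dimensional Laplace/concavity estimate showing the weight-$j$ mass concentrates where $\mu\approx j/k$, peak sections to pass from $L^2$ to pointwise bounds, and H\"ormander's $L^2$-estimate to glue model sections to global ones), but it diverges from the paper at both ends in ways worth noting. For the forbidden region, you bound $\rho_k^\delta$ by the reproducing kernel of the weight-$\ge\delta k$ subspace of the model via the extremal characterization; the paper instead shows each element of an orthonormal basis of $\hcal_k^{\delta k}$ has $\epsilon(k)$ $L^2$-mass on $W_\delta$ (its Lemma 3.2) and then pairs with the peak section $s_p$, summing over the $O(k^m)$ basis elements. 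For the allowed region the difference is more substantive: you construct a direct competitor $\sigma\in\hcal_k^{\delta k}$ peaking at $z$, enforcing the exact vanishing order by solving $\dbar$ against the singular weight $k\phi+2(\delta k+m)\log|w|$ -- this is the one idea the paper does not use, and it is the right device to guarantee membership in $\hcal_k^{\delta k}$ rather than mere smallness near $V$. The paper instead never touches $\hcal_k^{\delta k}$ directly on the allowed side: it builds the low-weight model space $G_\delta$ (spanned by $\pi^*s\otimes\prod s_i^{a_i}$, $\sum a_i<\delta k$), glues it with an ordinary H\"ormander estimate, projects onto $(\hcal_k^{\delta k})^\perp$ to get an almost-isometry $J$, deduces $\int_{X\setminus Y_\delta}(\rho_k-\rho_k^\delta)\,\omega^m=\epsilon(k)$, and converts this integrated bound to a pointwise one with peak sections. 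Your route buys a pointwise lower bound at each $z$ without needing to identify the whole orthogonal complement, at the cost of the singular-weight bookkeeping and the comparison-manifold/localization step ($\rho_k=\sum_j K^{(j)}+O(k^{-\infty})$), which the paper avoids by exploiting the exact fiberwise orthogonality of the monomials $z^a$ on the invariant polydisc bundle. Both arguments close; just make sure, in your version, that the projection of the coherent state onto weights $\ge\delta k$ is performed in a space where the weight decomposition is genuinely orthogonal (your comparison manifold, or the polydisc bundle as in the paper), since on $X$ itself the grading is only defined near $V$.
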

The term $k^{-\infty}$ in the theorem means a quantity depending on $k$ that is $O(k^{-m})$ as $k\rightarrow\infty$, for all $m\geq0$. 
	We will adopt the notation used by Donaldson and denote $k^{-\infty}$ by $\epsilon (k)$. So, in order to avoid confusion, we use $\delta$ for the partial density function instead of $\epsilon$ used in \cite{Ross2017}. 

\
	
In this article, we generalize Ross-Singer's result to the case when $V$ is a smooth subvariety. Let	$T^{m-n}=(S^1)^{m-n}$ denote the $(m-n)$-dimensional torus. 
We assume that there is a neighborhood $U$ of $V$ that admits a holomorphic $T^{m-n}$-action. We also assume the line bundle $L$ and the metric $h$, hence $\omega$, are invariant under the $T^{m-n}$-action.

Let $\mu: U\to \R^{m-n}$ be the moment map of the torus action, normalized so that $\mu(V)=0$. We denote by $\mu_i$ the $i$-th component of $\mu$. 
Then $\nu=\sum_{i=1}^{m-n}\mu_i:U\to \R$ is the moment map for the diagonal $S^1$-action.
Our main result is that the forbidden region is $\nu^{-1}([0,\delta))$:

\begin{theo}\label{main}
	For sufficiently small $\delta$, we have
	$$\rho_{k}^\delta \sim \left\{
	\begin{array}{lcl}
	\epk& &\text{on } \nu^{-1}[0,\delta) \\
	\rho_k+\epk& & \text{on } X\backslash \nu^{-1}[0,\delta]
	\end{array}
	\right.$$
\end{theo}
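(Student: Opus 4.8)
The plan is to reduce both assertions to a single estimate on one weight--component of a section, carried out in a normal form near $V$, and then to globalize by a $\dbar$--argument. First I would put the data in normal form on the tubular neighborhood $U$: identify $U$ with a disk bundle in the normal bundle $N=N_{V/X}=N_1\oplus\cdots\oplus N_{m-n}$, the splitting being into weight spaces of the $T^{m-n}$--action, the $i$-th circle rotating a fibre coordinate $w_i$; then $h^k=e^{-k\phi}$ with $\phi=\phi(v,|w_1|^2,\dots,|w_{m-n}|^2)$ and $\mu_i=|w_i|^2\,\partial\phi/\partial|w_i|^2$, and positivity of $\omega$ becomes strict convexity of $\phi$ in the coordinates $x_i=\log|w_i|^2$, so that fibrewise $x\mapsto\mu$ is the gradient map of a convex function. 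Since $h$ is $T^{m-n}$--invariant, every $s\in\hcal_k$ has a Fourier expansion $s|_U=\sum_{\alpha\in\Z_{\ge0}^{m-n}}s^{(\alpha)}$ with $s^{(\alpha)}=\tau^\alpha\,\pi^*g_\alpha$, $\tau^\alpha$ the tautological section of $\pi^*N^\alpha$ and $g_\alpha\in H^0(V,L^k|_V\otimes N^{-\alpha})$; the crucial dictionary is that the order of vanishing of $s$ along $V$ equals $\min\{|\alpha|:s^{(\alpha)}\ne0\}$, so $\hcal_k^{\delta k}=\{s\in\hcal_k:\ s^{(\alpha)}\equiv0\text{ on }U\text{ for }|\alpha|<\delta k\}$.

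The heart of the argument is a Laplace/Legendre estimate on one weight--component. With $F_v(x,a)=\langle a,x\rangle-\phi(v,x)$ and $\phi_v^{*}$ the fibrewise Legendre transform of $\phi(v,\cdot)$, applying Laplace's method in the normal fibres and bounding the $L^2$--norm from below by integrating over the single fibre through $\pi(z)$ gives, for every holomorphic weight--$\alpha$ section $t$ and every $z\in U$,
\[
\frac{|t(z)|_h^{2}}{\|t\|^{2}_{L^{2}(U)}}\ \le\ P(k)\,\exp\!\Big(k\big[F_{\pi(z)}\big(x(z),\tfrac{\alpha}{k}\big)-\phi^{*}_{\pi(z)}\big(\tfrac{\alpha}{k}\big)\big]\Big),
\]
$P$ polynomial, $x(z)=(\log|w_i(z)|^2)$. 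The bracket is minus the Bregman divergence of $\phi^{*}_{\pi(z)}$ between $\alpha/k$ and $\mu(z)$, hence $\le0$; uniform strict convexity of $\phi_v^{*}$ on compacta makes it $\le-c\,|\alpha/k-\mu(z)|^2$ (the directions in which $\alpha/k$ leaves the fibrewise moment polytope, and very large $|\alpha|$, handled by a boundary Laplace bound), so in particular it is bounded above by a negative constant once $\sum_i(\alpha/k)_i$ and $\nu(z)=\sum_i\mu_i(z)$ are separated. Summing over $\alpha$ (the Gaussian decay in $|\alpha|$ makes this absolutely convergent) yields, for $z$ in a compact subset of $\nu^{-1}[0,\delta)$, that $|s(z)|^2_h\le\epk\,\|s\|^2$ for all $s\in\hcal_k^{\delta k}$, i.e. $\rho_k^\delta=\epk$ there. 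Interchanging the roles of low and high weights, the same estimate shows that for $z_0$ in a compact subset of $\{z\in U:\nu(z)\notin[0,\delta]\}$ the low--weight part $\sum_{|\alpha|<\delta k}S_{z_0}^{(\alpha)}$ of the normalized coherent state $S_{z_0}=K_k(\cdot,z_0)/\sqrt{\rho_k(z_0)}$ has $L^2(U)$--norm $\epk$.

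For the complement, note $\rho_k-\rho_k^\delta$ is the on--diagonal reproducing kernel of $W_k=(\hcal_k^{\delta k})^\perp$, which equals $\sup_{\|s\|\le1}|s(z_0)|^2=\rho_k(z_0)\,\dist(S_{z_0},\hcal_k^{\delta k})^2$, so it suffices to approximate $S_{z_0}$ in $L^2(X)$ to within $\epk$ by a section of $\hcal_k^{\delta k}$. If $z_0\notin U$, it lies at fixed positive distance from $V$, so $S_{z_0}$ is $\epk$ on a neighborhood of $V$ by the standard off--diagonal Bergman decay; cut it off there by a $\chi$ vanishing near $V$ and solve $\dbar u=\dbar(\chi S_{z_0})$ against $h^k\otimes e^{-(\lceil\delta k\rceil+m-n)\psi}$, where $\psi$ has a $\log|w|^2$ singularity along $V$. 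Since $\log|w|^2$ is plurisubharmonic, this weight keeps the curvature $\ge\epsilon_0 k\,\omega$, so Hörmander gives $u$ with $\|u\|=\epk$ vanishing to order $\ge\delta k$ along $V$, and $s':=\chi S_{z_0}-u\in\hcal_k^{\delta k}$ is within $\epk$ of $S_{z_0}$ (for $\delta$ small relative to $\dist(X\setminus U,V)^2$). If $z_0\in U$ with $\nu(z_0)\notin[0,\delta]$, then $S_{z_0}$ need not be small near $V$, so instead subtract its low--weight part times a cutoff $\chi$ equal to $1$ on a fixed neighborhood of $V$: $\chi\sum_{|\alpha|<\delta k}S_{z_0}^{(\alpha)}$ agrees with $S_{z_0}$ modulo $\hcal_k^{\delta k}$ near $V$, is holomorphic off the support of $\dbar\chi$ (at fixed distance from $V$), and is $\epk$ there by the model estimate; the same $\dbar$--correction then produces $s'\in\hcal_k^{\delta k}$ within $\epk$ of $S_{z_0}$. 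Hence $\rho_k^\delta=\rho_k+\epk$ on compacta of $X\setminus\nu^{-1}[0,\delta]$.

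I expect the main obstacle to be the model estimate: setting up the normal form together with the precise weight--versus--vanishing--order dictionary in codimension $>1$ (where $\mu$ is vector--valued and $N$ need not be positive), and then making the Laplace--Legendre bound genuinely uniform — controlling large weights $\alpha$ and the directions in which $\alpha/k$ leaves the fibrewise moment polytope, and verifying that the thresholds on $\delta$ are uniform over the relevant compact sets.
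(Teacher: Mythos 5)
Your proposal follows the same high--level strategy as the paper's proof --- torus Fourier decomposition near $V$, a Laplace--type estimate locating the mass of each weight--$\alpha$ component near $\{\mu=\alpha/k\}$, and a H\"ormander $\dbar$--correction to globalize --- but you implement both halves in a genuinely different way, and the differences are worth recording. For the model estimate, you work directly in $m-n$ real variables via the fibrewise Legendre transform $\phi^*_v$ and a Bregman--divergence lower bound $-c|\alpha/k-\mu(z)|^2$; the paper instead proves a purely one--dimensional lemma (Lemma \ref{1-dim-model}) with an explicit case analysis in the size of $a$ (large, intermediate, small) and then applies it coordinate--by--coordinate on the polydisc fibres, appealing to a compactness argument for uniformity of constants. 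Your formulation is cleaner and makes the uniformity question visible up front, but it requires the boundary behaviour of $\phi^*_v$ (weights leaving the moment polytope) to be handled by hand, a point you acknowledge and the paper's case 1) handles concretely. For the forbidden region you get a direct pointwise bound $|s(z)|^2\le\epk\|s\|^2$ by summing the weight--estimate; the paper first proves $L^2$--mass concentration of sections of $\hcal_k^{\delta k}$ outside $W_\delta$ and then converts to a pointwise bound by pairing with peak sections $s_p$. These are equivalent but organized differently.

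The sharpest contrast is in the complement. The paper builds the model subspace $G_\delta$ spanned by $\pi^*s\otimes\prod s_i^{a_i}$ with $\sum a_i<\delta k$, projects a cutoff of each generator into $\hcal_k$ by solving $\dbar v=\dbar\chi\otimes\tilde s$ with the \emph{standard} smooth weight, shows the resulting map $J:G_\delta\to(\hcal_k^{\delta k})^\perp$ is an almost--isometry, deduces $\int_X(\rho_k-\rho_k^\delta-\rho_G)\,\omega^m=\epk$, and then upgrades the integral statement to a pointwise one by pairing unit vectors of $(\hcal_k^{\delta k})^\perp$ with peak sections $s_p$ for $p$ outside $Y'_\delta$. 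You instead work one point at a time: you use the identity $\rho_k(z_0)-\rho_k^\delta(z_0)=\rho_k(z_0)\,\dist(S_{z_0},\hcal_k^{\delta k})^2$ and approximate the coherent state $S_{z_0}$ by an element of $\hcal_k^{\delta k}$, solving $\dbar$ against a \emph{singular} weight $e^{-(\lceil\delta k\rceil+m-n)\psi}$ with a $\log|w|^2$ pole along $V$ to force the correction to vanish to order $\ge\delta k$. Both routes work; yours avoids the ``almost orthonormal basis plus integral identity plus peak--section bootstrap'' chain at the cost of invoking H\"ormander with a singular psh weight (which, as you note but express backwards, \emph{adds} positive curvature in the sense of currents, so the estimate goes through provided one regularizes or invokes the Demailly version). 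Your two--case split (whether $z_0\in U$ or not, and whether $\chi$ vanishes near $V$ or equals $1$ there) is essentially an unpacking of what the paper accomplishes by one global comparison of $G_\delta$ with $(\hcal_k^{\delta k})^\perp$. One small wording slip: the phrase ``bounding the $L^2$--norm from below by integrating over the single fibre through $\pi(z)$'' should be ``over a tube of fibres near $\pi(z)$''; the single fibre has measure zero. This does not affect the substance of the argument.
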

The proof uses the techniques we developed in \cite{SunSun,Annalen2019,Punctured}.
The idea of the proof is similar to that in \cite{Ross2017}. Unfortunately, we have not been able to give an asymptotic expansion for the renormalized partial Bergman kernel as Ross-Singer did in \cite{Ross2017}. It is also very interesting to know if there is still a forbidden region and what the boundary is, if the assumption of the torus-invariance is removed.

We would like to compare the partial density function to the logarithmic Bergman kernel studied by the author in \cite{sun2019logarithmic}, for which the vanishing order is $\geq 1$ instead of $\geq \delta k$. In \cite{sun2019logarithmic}, we showed that the asymptotics near the subvariety depend on the distance function to the subvariety. When $\delta$ is small enough, the moment function $\nu$ is comparable with the distance function squared. One can see the similarity between them.

 The asymptotics of off-diagonal Bergman kernel have been extensively used in the  value distribution theory of sections of line bundles by Shiffman-Zelditch and others\cite{bsz0,bsz1,bsz2,bsz3,sz4,sz1,sz2,sz3,SZ2004,szz,sz6,dsz1,dsz2,dsz3}. It would be very interesting and important to study the asymptotics of the partial off-diagonal Bergman kernel vanishing along a general subvariety.

	\
	
The structure of this article is as follows. We will first prove the key estimates on the complex plane. Then we apply the key lemma to the neighborhood $U$. Then with the help of the H\"{o}rmander's technique, we show that what we get on $U$ goes directly to $X$, hence proving the main theorem.

\

\textbf{Acknowledgements.} The author would also like to thank Professor Song Sun and Professor Chengjie Yu for many very helpful discussions. 

	\section{One dimensional model}


Let $\omega=f\sqrt{-1}dz\wedge d\bar{z}$ be a smooth \kahler form on $\C$ invariant under the natural $S^1$-action. Let $\phi(|z|)$ be a smooth potential of $\omega$. By a linear change of coordinates, we can assume that $\phi=|z|^2+O(|z|^4)$. Let $\mu:\C\to\R$ be the moment map of the $S^1$-action normalized so that $\mu(0)=0$. We will use the notations $r=|z|$, $x=|z|^2$ and $t=-\log x$. So $\phi_{tt}=x\Delta \phi=xf$.
Then $$\mu=\int_0^x fdx.$$
And since $\lim_{t\to\infty}\phi_{t}=0$, we have
\begin{eqnarray*}
	\phi_t&=&-\int_{t}^{\infty}\phi_{tt}dt\\
	&=&\int_{x}^{0}fdx\\
		&=&-\mu
\end{eqnarray*}
Let $a\geq 0$ be an integer, $f_1(|z|)>0$ be a smooth function, we consider the $k$-th norm of the holomorphic function $z^a$:
$$I_a=\int_{\C}e^{-k\phi}|z|^{2a}f_1\sqrt{-1}dz\wedge d\bar{z}=2\pi\int_{-\infty}^{\infty}e^{-k\phi-(a+1)t+\log f_1}dt.$$
Denote by $g_a(t)=-k\phi-(a+1)t+\log f_1$. Then $g_a'=-k\phi_t-(a+1)+(\log f_1)_t$ and $g_a''=-k\phi_{tt}+(\log f_1)_{tt}$. Since $f_1=A+O(|z|^2)$, $A>0$, we have $(\log f_1)_{t}=O(\phi_{t})$ and $(\log f_1)_{tt}=O(\phi_{tt})$. So for $k$ large enough, $g_a$ is a concave function of $t$ which attains its only maximum at $t_a$ satisfying
$$\phi_t(t_a)=-\frac{a+1}{k}+\frac{(\log f_1)_t(t_a)}{k}.$$
Namely, $$\mu(t_a)=\frac{a+1}{k}+O(\frac{x_a}{k}),$$ 
when $x_a<1$.

\

We recall the following basic lemma used in \cite{SunSun}:

\begin{lem}\label{lemconcave}
	Let $f(x)$ be a concave function. Suppose $f'(x_0)<0$, then we have
	$$\int_{x_0}^\infty e^{f(x)}dx\leq\frac{e^{f(x_0)}}{-f'(x_0)}$$
\end{lem}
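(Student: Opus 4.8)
The plan is to dominate the integrand by the exponential of the supporting line of $f$ at $x_0$. Since $f$ is concave and $f'(x_0)$ exists, the tangent line at $x_0$ lies above the graph of $f$, i.e. $f(x)\leq f(x_0)+f'(x_0)(x-x_0)$ for every $x$ in the domain. First I would record this inequality; it is precisely the statement that a concave function lies below each of its supporting lines, and it is the only structural input needed.

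Next I would restrict to $x\geq x_0$ and exponentiate to get $e^{f(x)}\leq e^{f(x_0)}\,e^{f'(x_0)(x-x_0)}$. Because $f'(x_0)<0$, the right-hand side is an integrable exponential on $[x_0,\infty)$, which in particular shows that the left-hand integral converges; alternatively, concavity forces $f'$ to be non-increasing, so $f$ decreases at least linearly past $x_0$.

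Finally I would integrate both sides over $[x_0,\infty)$:
$$\int_{x_0}^\infty e^{f(x)}\,dx \leq e^{f(x_0)}\int_{x_0}^\infty e^{f'(x_0)(x-x_0)}\,dx = \frac{e^{f(x_0)}}{-f'(x_0)},$$
which is the asserted bound.

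There is essentially no obstacle here. The only point deserving a word of care is the justification of the supporting-line inequality when $f$ is merely assumed concave rather than $C^1$; but since the hypothesis presupposes that $f'(x_0)$ exists, the standard supporting-hyperplane property of concave functions applies verbatim, and the rest is a one-line computation.
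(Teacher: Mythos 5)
Your proof is correct, and it is the standard supporting-line argument one would expect; the paper itself gives no proof of this lemma (it is simply recalled from an earlier reference), so there is nothing to compare against beyond noting that your tangent-line domination followed by integrating the exponential is exactly the intended one-line argument.
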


Let $0<R_1<R<1$ be two fixed numbers.
 When $\mu(t_a)>R$, we have $a>Rk-C$ for some $C$ independent of $k$, then $g_a'<0$ when $\mu\leq R$. Let $\mu(t_R)=R$ and $\mu(t_{R_1})=R_1$, then we let $t_m=\frac{t_R+t_{R_1}}{2}$. Clearly, $|g_a'(t_m)|>C_1k$ for some $C_1>0$ depending on $R$ and $ R_1$ but independent of $k$ and $a$. By the concavity of $g_a$, we have
 $$e^{g_a(t_m)-g_a(t_{R_1})}=\epk$$
 	Therefore, by lemma \ref{lemconcave}, we have
 	$$\int_{\mu<R_1}e^{g_a(t)}dt=\epk \int_{R_1\leq \mu\leq R}e^{g_a(t)}dt.$$
 	
 	When $\mu(t_a)\leq R$, depending on $a$, there are two cases . 
  
 	When $a\geq \sqrt{k}$, $g_a''(t_a)$ is large, so we can use Laplace's method to estimate the integral, namely
 	$$I_a\approx \frac{(2\pi)^{3/2}}{\sqrt{g_a''(t_a)}}.$$
 	More importantly, the  mass of the measure $e^{g_a(t)}dt$ is concentrated in a small neiborhood of $t_a$. More precisely, if $k\phi_{tt}(t_a)\delta_a^2>2\log^2 k$, then by lemma \ref{lemconcave}, we have $$I_a=(1+\epk)\int_{t_a-\delta_a}^{t_a+\delta_a}e^{g_a(t)}dt.$$
 	It is easy to see that there is a constant $C_2>0$ independent of $k$ such that if $\delta_a>C_2\frac{\log k}{\sqrt{a}}$, then the condition $k\phi_{tt}(t_a)\delta_a^2>2\log^2 k$ is satisfied.

	When $a<\sqrt{k}$, the term $k\phi_{tt}(t_a)$ may be too small to use Laplace's method. We have the following
	
	\begin{eqnarray*}
		\int_{x<k^{-1/4}} e^{g_a(t)}dt&\geq& e^{-(a+1)}e^{-(a+1)e}e^{g_a(t_a)}\\
		&\geq& e^{-(a+1)}e^{-(a+1)e}e^{-(a+1)+a\log\frac{a+1}{k}}\\
		&\geq& e^{-5(a+1)}(\frac{a+1}{k})^a
	\end{eqnarray*}
When $x=k^{-1/4}$, $\frac{e^{g_a(\log k^{1/4})}}{e^{-5(a+1)}(\frac{a+1}{k})^a}=\epk$. So by lemma 
\ref{lemconcave}, we have for $a<\sqrt{k}$,
$$\int_{\mu\geq 2k^{-1/4}} e^{g_a(t)}dt	\leq \int_{x\geq k^{-1/4}} e^{g_a(t)}dt=\epk I_a$$

In summary, we have proved the following
\begin{lem}\label{1-dim-model}
For fixed $0<R_1<R<1$, there exist constants $C, C_1>0$ independent of $k$, such that for $k$ large enough we have the following estimations:
	\begin{itemize}
		\item[1)] If $a>Rk-C$, $$\int_{\mu<R_1}e^{g_a(t)}dt=\epk \int_{R_1\leq \mu\leq R}e^{g_a(t)}dt,$$
		where the $\epk$ term is independent of $a$.
			\item[2)] If $\sqrt{k}\leq a\leq Rk-C$,$$\int_{|\mu-\frac{a}{k}|\geq C_1\frac{\sqrt{a}\log k}{k}}e^{g_a(t)}dt=\epk \int_{|\mu-\frac{a}{k}|< C_1\frac{\sqrt{a}\log k}{k}}e^{g_a(t)}dt,$$
			where the $\epk$ term is independent of $a$.
				\item[3)] If $a<\sqrt{k}$, $$\int_{\mu\geq 2k^{-1/4}} e^{g_a(t)}dt	=\epk \int_{\mu< 2k^{-1/4}} e^{g_a(t)}dt,$$
				where the $\epk$ term is independent of $a$.
	\end{itemize}
	
\end{lem}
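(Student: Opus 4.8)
The three cases are really just three instances of the same mechanism: $g_a$ is (for large $k$) a strictly concave function of $t$, so $e^{g_a(t)}\,dt$ is a unimodal measure peaked at $t_a$, and the claim in each case is that the mass outside a suitable window around the peak is negligible. The tool in every case is Lemma \ref{lemconcave}, applied on each side of an appropriate cutoff: if we pick a point $t_*$ with $g_a'(t_*)<0$ (resp. $g_a'(t_*)>0$ on the left tail) and can bound $|g_a'(t_*)|$ below, then $\int_{t_*}^\infty e^{g_a}\,dt\le e^{g_a(t_*)}/|g_a'(t_*)|$, and the whole business reduces to showing that $e^{g_a(t_*)}$ is exponentially smaller than the integral over the retained window. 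So the skeleton of the proof is: (i) locate the peak $t_a$ via $\mu(t_a)=\frac{a+1}{k}+O(x_a/k)$; (ii) choose the cutoff; (iii) lower-bound the retained integral by its value near the peak (crudely, by the length of a subinterval times the minimum of $e^{g_a}$ there); (iv) upper-bound the discarded tail by Lemma \ref{lemconcave}; (v) compare and check the ratio is $\epk$ uniformly in $a$.

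For part (1), since $a>Rk-C$ forces $\mu(t_a)>R$ (using $\mu(t_a)=\frac{a+1}{k}+O(\cdot)$ and smallness of $x_a$), we have $g_a'<0$ on all of $\{\mu\le R\}$, and in particular $g_a'(t_m)<0$ with $|g_a'(t_m)|\ge C_1k$ at the midpoint $t_m$ between $t_R$ and $t_{R_1}$; this is exactly the computation already sketched in the text, so $e^{g_a(t_m)-g_a(t_{R_1})}=\epk$ and Lemma \ref{lemconcave} gives $\int_{\mu<R_1}e^{g_a}\le \frac{e^{g_a(t_m)}}{C_1k}=\epk\,e^{g_a(t_{R_1})}\cdot\frac1{C_1k}$, which is $\epk$ times $\int_{R_1\le\mu\le R}e^{g_a}$ since that integral is at least a fixed length times $e^{g_a(t_{R_1})}$ (as $g_a$ is monotone decreasing and $\ge g_a(t_{R_1})$... wait, one must be slightly careful: on $[t_{R_1},t_R]$ we have $g_a\le g_a(t_{R_1})$, so instead bound $\int_{R_1\le\mu\le R}e^{g_a}\ge (\text{length})\cdot e^{g_a(t_R)}$ and observe $e^{g_a(t_m)}/e^{g_a(t_R)}$ is itself $\le 1$ so one compares $e^{g_a(t_m)}$ against $e^{g_a(t_R)}$ using concavity again). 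The uniformity in $a$ comes from the fact that $C_1$ depends only on $R,R_1$.

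For part (2), the peak sits at $\mu(t_a)=\frac ak+O(1/k)$ with $g_a''(t_a)\asymp k\phi_{tt}(t_a)\asymp a$ (since $\phi_{tt}=xf$ and $x_a\asymp \mu(t_a)\asymp a/k$), so the Gaussian width is $\asymp 1/\sqrt{a}$ and the quoted window of radius $C_1\sqrt a\log k/k$ in the $\mu$-variable is $\asymp (\log k)/\sqrt a$ in the $t$-variable — i.e.\ $\log k$ standard deviations. Concavity plus the second-order Taylor bound $g_a(t)\le g_a(t_a)-c\,g_a''(t_a)(t-t_a)^2$ near the peak, extended to the tail by Lemma \ref{lemconcave} at the endpoint of the window (where $g_a' \asymp -k\phi_{tt}(t_a)\delta_a \asymp -\sqrt a\log k$ and $g_a$ has dropped by $\asymp \log^2 k$), gives the tail $\le \epk$ times the retained mass; this is the step already foreshadowed by the condition $k\phi_{tt}(t_a)\delta_a^2>2\log^2k$. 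For part (3), with $a<\sqrt k$ the peak is too flat for Laplace, so instead one uses the explicit lower bound $\int_{x<k^{-1/4}}e^{g_a}\,dt\ge e^{-5(a+1)}(\frac{a+1}{k})^a$ derived in the text, together with the endpoint estimate at $x=k^{-1/4}$ showing $e^{g_a}$ there is $\epk$ times that lower bound, and Lemma \ref{lemconcave} again (valid because $g_a'<0$ once $\mu\ge 2k^{-1/4}$, as the peak has $\mu(t_a)=O(1/\sqrt k)=o(k^{-1/4})$).

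The main obstacle — and the only place real care is needed — is the uniformity in $a$ in part (2): one must verify that all the implied constants in $g_a''(t_a)\asymp a$, in the Taylor remainder, and in the lower bound for the retained integral depend only on the geometry ($\phi$, $f_1$, $R$) and not on $a$ or $k$, so that the single exponent $C_1$ works across the whole range $\sqrt k\le a\le Rk-C$. This hinges on the uniform comparability $x_a\asymp a/k$ and on $\phi_{tt}=xf$ with $f$ bounded above and below on the relevant compact range of $x$, which is where the hypothesis that $\delta$ (hence the range of $\mu$) is small gets used. Once that is pinned down, parts (1) and (3) are routine applications of the same two-sided Lemma \ref{lemconcave} argument.
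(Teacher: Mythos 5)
Your proposal follows essentially the same route as the paper, whose proof is exactly the discussion preceding the lemma: locate the peak via $\mu(t_a)=\frac{a+1}{k}+O(x_a/k)$, control each tail with Lemma \ref{lemconcave} at a cutoff where $|g_a'|$ is bounded below, and lower-bound the retained mass near the peak; you reproduce all three regimes, including the midpoint $t_m$ with $|g_a'(t_m)|\ge C_1k$ in case (1), the criterion $k\phi_{tt}(t_a)\delta_a^2>2\log^2k$ with $\delta_a\asymp \log k/\sqrt{a}$ and the $t$-to-$\mu$ conversion via $|\mu_t|=\phi_{tt}\asymp a/k$ in case (2), and the explicit lower bound $e^{-5(a+1)}\left(\frac{a+1}{k}\right)^a$ with the endpoint estimate at $x=k^{-1/4}$ in case (3). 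The only thing to tidy is orientation: since $t=-\log x$ and $\mu$ is decreasing in $t$, one has $t_R<t_m<t_{R_1}$ with $g_a$ decreasing on $[t_R,t_{R_1}]$, so the correct comparison is $e^{g_a(t_{R_1})}\le \epsilon(k)\,e^{g_a(t_m)}$ (not the reverse) with the retained integral bounded below by $(t_m-t_R)e^{g_a(t_m)}$ over $[t_R,t_m]$, and on $\{\mu\ge 2k^{-1/4}\}$ one has $g_a'>0$ and uses the mirrored (left-tail) form of Lemma \ref{lemconcave} --- slips that the two-sided framework you set up at the outset already accommodates.
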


	\section{proof of the main theorem}
By assumption, there is a neighborhood $U$ of $V$ that is covered with charts that admit standard coordinates $(z,w)$ so that $V=\{z=0\}$, where $z=(z_1,\cdots,z_{m-n})\in \C^{m-n}$ and $w=(w_1,\cdots,w_n)\in \C^n$, and
$$\lambda\cdot(z,w)=(\lambda z, w),$$
for $\lambda=(\lambda_1,\cdots,\lambda_{m-n})$, where $\lambda z=(\lambda_1z_1,\cdots,\lambda_{m-n}z_{m-n})$. We have the following lemma, 
\begin{lem}
	There is a $T^{m-n}$-invariant biholomorphism $\Psi$ from the neighborhood $U$ to an open neighborhood of $V$ in the total space of the vector bundle $L_1\oplus L_2\cdots \oplus L_{m-n}$, where each $L_i$ is a holomorphic line bundle.
\end{lem}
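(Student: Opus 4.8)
The plan is to show that the coordinate charts already provided on $U$ are ``linear'' for the torus action, so that the entire content of the lemma is to recognize their transition maps as those of a direct sum of line bundles over $V$. Equivalently, this is the holomorphic equivariant tubular neighborhood theorem for $V\subset X$ combined with the weight decomposition of the normal bundle $N_{V/X}$; I will argue it directly from the given charts, which avoids any averaging.

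First I would cover $V$ by finitely many of the given charts $U_\alpha\subset U$ with standard coordinates $(z^{(\alpha)},w^{(\alpha)})$, so that $V\cap U_\alpha=\{z^{(\alpha)}=0\}$ and $\lambda\cdot(z,w)=(\lambda z,w)$ in each chart; this finite subcover exists since $V$ is compact. On an overlap $U_\alpha\cap U_\beta$, write the change of coordinates as $F=(F_z,F_w)$ with $F_z\in\C^{m-n}$, $F_w\in\C^n$. Since $F$ intertwines the two standard $T^{m-n}$-actions, each component must satisfy $(F_z)_i(\lambda z,w)=\lambda_i(F_z)_i(z,w)$ and $F_w(\lambda z,w)=F_w(z,w)$. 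Comparing the torus weight of each monomial $z^\gamma w^\delta$ occurring in the holomorphic Taylor expansion of $F$ (so $\gamma\geq0$) then forces $F_w$ to be independent of $z$, say $F_w=\psi_{\beta\alpha}(w)$, and $(F_z)_i=z_i\,h^i_{\beta\alpha}(w)$ for holomorphic $h^i_{\beta\alpha}$, which is moreover nowhere vanishing on $V\cap U_\alpha\cap U_\beta$ because $dF$ is invertible along $\{z=0\}$ and block-triangular there with normal block $\diag(h^1_{\beta\alpha},\dots,h^{m-n}_{\beta\alpha})$.

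Next I would observe that $\{\psi_{\beta\alpha}\}$ are exactly the transition functions of $V$ as a complex manifold, and that for each fixed $i$ the relation $F_{\gamma\alpha}=F_{\gamma\beta}\circ F_{\beta\alpha}$ yields the cocycle identity for $\{h^i_{\beta\alpha}\}$, hence a holomorphic line bundle $L_i\to V$. In the induced charts the total space of $L_1\oplus\cdots\oplus L_{m-n}$ has transition maps $(z,w)\mapsto\bigl((z_ih^i_{\beta\alpha}(w))_i,\psi_{\beta\alpha}(w)\bigr)$, which are precisely the maps $F$ above. Therefore the local coordinate identifications $(z,w)\mapsto(z,w)$ agree on overlaps and glue to a biholomorphism $\Psi$ from a neighborhood of $V$ in $U$ onto a neighborhood of the zero section of $\bigoplus_i L_i$; after possibly shrinking $U$ (using the finiteness of the cover) we may take $\Psi$ defined on all of $U$. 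By construction $\Psi$ carries the $T^{m-n}$-action on $U$ to the fiberwise scaling $\lambda\cdot(v_1,\dots,v_{m-n})=(\lambda_1v_1,\dots,\lambda_{m-n}v_{m-n})$ on $\bigoplus_i L_i$, so it is $T^{m-n}$-invariant.

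The one place that deserves care is the weight comparison in the second step: it is holomorphy — the absence of negative powers of $z$ — that forces the transition maps to be \emph{linear} in the normal variable, and this is exactly what allows the local linear models to patch into a genuine vector bundle without a Bochner-type linearization argument. Everything else is bookkeeping: extracting the finite subcover, verifying the cocycle condition, and shrinking neighborhoods so that the image is an honest open neighborhood of the zero section. I do not expect a serious obstacle; the statement is essentially the holomorphic slice/tubular-neighborhood theorem made explicit in the presence of the given adapted charts.
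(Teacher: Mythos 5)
Your argument is correct and follows essentially the same route as the paper: use $T^{m-n}$-equivariance of the transition maps between the given adapted charts to force them to be of the form $z\mapsto(z_ih^i_{\beta\alpha}(w))_i$, $w\mapsto\psi_{\beta\alpha}(w)$, identify the $h^i_{\beta\alpha}$ as cocycles defining line bundles $L_i\to V$, and glue the chart identifications into the biholomorphism $\Psi$. Your weight-comparison step in fact spells out a point the paper leaves implicit — that the diagonal linear form of $F_z$ in the normal variable, not merely $z$-independence of the coefficients, comes from equivariance — so the proposal is, if anything, a slightly more careful version of the paper's argument.
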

 The proof is very similar to that of lemma 5.3 in \cite{RossSinger}, and for the convenience of the reader, we include a proof here.
\begin{proof}
	Let $(z_{\alpha},w_{\alpha})$ and $(z_\beta,w_\beta)$ be two sets of standard coordinates. Then the transition functions between them are necessarily of the form
	$$z_\beta=\lambda_{\alpha\beta}(z_\alpha,w_\alpha)z_\alpha,\quad w_\beta=\tau_{\alpha\beta}(z_\alpha, w_\alpha) $$
	where $\lambda_{\alpha\beta}$ and $\tau_{\alpha\beta}$ are holomorphic and take values in $\C^{*(m-n)}$ and $\C^{(m-n)}$ respectively.
	We understand the torus $T^{m-n}$-action on the $w$-part is trivial. Then both $\lambda_{\alpha\beta}$ and $\tau_{\alpha\beta}$ are $T^{m-n}$-invariant, so their dependence on $z_\alpha$ are trivial. So the transition functions
	$$z_\beta=\lambda_{\alpha\beta}(w_\alpha)z_\alpha$$
	defines a holomorphic vector bundle $E$ of rank $m-n$ over $V$, with structure group $T^{m-n}$. Therefore, $E$ decomposes as $E=L_1\oplus L_2\cdots\oplus L_{m-n}$, and $U$ is biholomorphic to an open neighborhood of $V$ in $E$. Clearly, the biholomorphism is $T^{m-n}$-invariant.
\end{proof}
We consider $U$ as an open set in $\oplus L_i$ and denote by $\pi:U\to V$ the projection. Then for $A$ small enough, $U$ contains the poly-disc bundle $U_A=\cap_{1\leq i\leq m-n}\mu_i^{-1}(x<A)$. For each $p\in V$, the fiber of $U_A$ is an open poly-disc $D_A$ in $\C^{m-n}$. By the torus-invariance, the restriction $\phi_p$ of $\phi$ depends only on $|z_1|,\cdots,|z_{m-n}|$, so does the restriction $\omega_p$ of $\omega$. So the monomials $z^a=\prod z_i^{a_i}$ are all orthogonal to each other on $D_p$, where  $a=(a_1,\cdots,a_{m-n})$ is an $(m-n)$-tuple of non-negative integers.

\begin{lem}
	Let $W_\delta=\{q\in D_p|\nu(q)<\delta-k^{1/4} \}$. 
	When $\delta$ is small enough, for each holomorphic function $f=\sum_{\sum a_i\geq \delta k}c_az^a$, we have
	$$\int_{W_\delta}|f|^2e^{-k\phi_p}\omega_p^{m-n}=\epk\int_{D_p}|f|^2e^{-k\phi_p}\omega_p^{m-n}$$
\end{lem}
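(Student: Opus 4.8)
The plan is to reduce the multi-dimensional estimate on the polydisc fiber $D_p$ to the one-dimensional model estimates of Lemma~\ref{1-dim-model} applied one coordinate at a time, using the orthogonality of the monomials $z^a$. Since $\phi_p$ depends only on $|z_1|,\dots,|z_{m-n}|$, we may write $e^{-k\phi_p}\omega_p^{m-n}$ in the product form $\prod_i e^{g^{(i)}_{a_i}(t_i)}\,dt_i$ (up to the lower-order factors $f_1$ coming from the volume form, which are harmless exactly as in Section~2), and because the monomials are mutually orthogonal on $D_p$ we have $\int_{D_p}|f|^2 e^{-k\phi_p}\omega_p^{m-n}=\sum_{\sum a_i\geq\delta k}|c_a|^2\,\prod_{i=1}^{m-n} J^{(i)}_{a_i}$, where $J^{(i)}_{a_i}=\int e^{g^{(i)}_{a_i}(t_i)}dt_i$ is the one-dimensional integral over the $i$-th disc. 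So it suffices to estimate, for each fixed multi-index $a$ with $\sum a_i\geq\delta k$, the ratio of the integral of $\prod_i e^{g^{(i)}_{a_i}}$ over $W_\delta=\{\nu<\delta-k^{-1/4}\}$ to its integral over all of $D_p$, uniformly in $a$.

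The key point is that on $W_\delta$ one has $\nu=\sum_i\mu_i<\delta$, while the one-dimensional analysis of Section~2 shows that the mass of $e^{g^{(i)}_{a_i}(t_i)}dt_i$ is concentrated where $\mu_i(t_i)\approx (a_i+1)/k$, so the total mass of the product measure is concentrated where $\sum_i\mu_i(t_i)\approx(\sum_i a_i)/k\geq\delta$, which is \emph{outside} $W_\delta$ by a definite margin once the concentration windows are accounted for. Concretely, I would split into the three regimes of Lemma~\ref{1-dim-model} according to the size of each $a_i$. For the coordinates with $a_i$ large (say $a_i\geq\sqrt k$), $\mu_i$ is pinned to within $C_1\sqrt{a_i}\log k/k$ of $a_i/k$; for $a_i<\sqrt k$, part 3) confines $\mu_i<2k^{-1/4}$ up to $\epk$; and the case $a_i>Rk-C$ is handled by part 1). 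Adding the concentration windows, $\nu$ is localized to within roughly $\sum_i C_1\sqrt{a_i}\log k/k \le C_1\sqrt{(m-n)\sum a_i}\log k/k = O(\log k/\sqrt k)=o(k^{-1/4})$ of $(\sum a_i)/k$ — here is where the precise threshold $\delta-k^{-1/4}$ in the definition of $W_\delta$ is chosen, so that $W_\delta$ misses the concentration region. Then one applies Lemma~\ref{lemconcave} coordinate-wise: on the complement of the concentration window in each factor the integrand is bounded by $\epk$ times its peak value, and since the $\epk$ factors in Lemma~\ref{1-dim-model} are uniform in $a$, multiplying over the $m-n$ factors still yields a single $\epk$, giving
$$\int_{W_\delta}\Big(\prod_i e^{g^{(i)}_{a_i}}\Big)\prod_i dt_i=\epk\int_{D_p}\Big(\prod_i e^{g^{(i)}_{a_i}}\Big)\prod_i dt_i,$$
with $\epk$ independent of $a$. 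Summing against $|c_a|^2$ over $\sum a_i\geq\delta k$ then gives the claimed identity.

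The main obstacle is bookkeeping the uniformity in $a$ while chopping $D_p$ into the product of concentration and tail regions: one must check that the finitely many ($3^{m-n}$, after choosing which regime each coordinate falls into) ways of combining the one-dimensional tail/peak decompositions all produce an $\epk$ that does not depend on $a$, and that the geometric region $W_\delta$ is genuinely contained in the union of tail regions of the relevant factors. The only subtle analytic input is the Cauchy–Schwarz step $\sum_i\sqrt{a_i}\le\sqrt{(m-n)\sum_i a_i}$ together with $\sum a_i\le (m-n)\max_i a_i$ balanced against $\sum a_i\geq \delta k$, to verify the localization window is $o(k^{-1/4})$ so that the margin defining $W_\delta$ is not eaten up; this is what forces $\delta$ to be taken small (so that the peaks $t_{a_i}$ stay inside the region $x<A$ where the $f_1$-type corrections are controlled) and is the place to be careful.
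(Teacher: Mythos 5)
Your proposal is correct and follows essentially the same route as the paper: reduce to monomials by orthogonality, then use the one-dimensional concentration of Lemma~\ref{1-dim-model} to show the mass of $|z^a|^2e^{-k\phi_p}$ sits where $\nu\approx\sum a_i/k\geq\delta$, outside $W_\delta$. The paper phrases the localization as a pigeonhole statement (at every point of $W_\delta$ some coordinate satisfies $\mu_i<a_i/k-k^{-3/8}$) rather than summing the concentration windows via Cauchy--Schwarz, but this is the same argument with the same inputs.
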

\begin{proof}
	By orthogonality, $\int |f|^2e^{-k\phi_p}\omega_p^{m-n}=\sum_{\sum a_i\geq \delta k} |c_a|^2\int |z^a|^2 e^{-k\phi_p}\omega_p^{m-n}$. So it suffices to prove the lemma for the monomials $z^a$ satisfying $\sum a_i\geq \delta k$. 
	
	By making $A$ a little smaller, a compactness argument shows that the constants $C$ and $C_1$ in lemma \ref{1-dim-model} can be chosen to be the same for all $z_i$-discs. For each point $p$ in the open set , at least one $1\leq i\leq m-n$ satisfies the condition that $\mu_i< \frac{a_i}{k}-k^{-3/8}$. Therefore, lemma \ref{1-dim-model} shows that the mass of $z^a$ in $W$ is $\epk$ relative to the mass in $D_A\backslash W_\delta$. And the theorem is proved

\end{proof}
As a corollary, we can prove the part about the forbidden region of theorem \ref{main}.
\begin{proof}[Proof of the first half of theorem \ref{main}]
	We can take an arbitrary orthonormal basis $\{s_i\}$ for the space $\hcal^{\delta k}_k$. Then the lemma above shows that each $s_i$ has $L^2$-mass of the size $\epk$ in the region $W_\delta$. We reall that for each point $p\in X$, there is an unit section $s_p$, called the peak section, satisfying $|s_p(p)|^2_h=\rho_k(p)$ and $s_p\perp s$ as long as $s(p)=0$.

We take the inner product of $s_i$ with the peak section$s_p$ for each $p\in W$
	$$|s_i(p)|=|<s_i,s_p>|\sqrt{\rho_k(p)}\leq \parallel s_i\parallel \sqrt{\rho_k(p)} $$
	 we see that the point-wise norm $|s_i(p)|$ is also $\epk$.   Since the dimension of $\hcal^{\delta k}_k$ is $O(k^m)$. We get the conclusion.  
\end{proof}

We denote by $T_i$ the $i$-th component of $T^{m-n}$. 
For each $L_i$, we fix a $T_i$-invariant section $s_i$ such that $\mu_i^{-1}(0)=\{s_i=0\}$. So we can choose local frame $e_i$ of $L_i$, so that $s_i=z_i e_i$ in the standard torus-invariance coordinates. 

For each $s\in H^0(V,kL-\otimes L_i^{a_i})$, where $a_i\geq 0$, $\pi^*s\in H^0(U,kL-\otimes L_i^{a_i})$, so $\tilde{s}=\pi^*s\otimes\prod s_i^{a_i}\in H^0(U,L^k)$. 
 We denote by $G_\delta$ the span of all such sections for $\sum_{i=1}^{m-n}a_i<\delta k$. Then clearly, for each section $\beta\in \hcal^{\delta k}_k$, we have $\beta|_U\perp G_\delta$. Moreover, since each $\tilde{s}$ is represented by a holomorphic function of the form
 $$\sum_{\sum_{i=1}^{m-n}a_i<\delta k} f_a(w)z^a,$$
  by lemma \ref{1-dim-model}, each section $\tilde{s}\in G_\delta$ has its mass concentrated within  the set $Y_\delta=\{q|\nu(q)\leq \delta+2(m-n)k^{-1/4} \}$ and decays fast as $\nu$ gets bigger. More precisely, the measure outside $Y_\delta$ is $\epk$ relative to that inside $Y_\delta$. To connect $G_\delta$ to $(\hcal^{\delta k}_k)^\perp$, we need to use H\"ormander's $L^2$ technique. The following lemma is well-known, see for example \cite{Tian1990On}. 

\begin{lem}
	Suppose $(M,g)$ is a complete \kahler manifold of complex dimension $n$, $\mathcal L$ is a line bundle on $M$ with hermitian metric $h$. If 
	$$\langle-2\pi i \Theta_h+Ric(g),v\wedge \bar{v}\rangle_g\geq C|v|^2_g$$
	for any tangent vector $v$ of type $(1,0)$ at any point of $M$, where $C>0$ is a constant and $\Theta_h$ is the curvature form of $h$. Then for any smooth $\mathcal L$-valued $(0,1)$-form $\alpha$ on $M$ with $\bar{\partial}\alpha=0$ and $\int_M|\alpha|^2dV_g$ finite, there exists a smooth $\mathcal L$-valued function $\beta$ on $M$ such that $\bar{\partial}\beta=\alpha$ and $$\int_M |\beta|^2dV_g\leq \frac{1}{C}|\alpha|^2dV_g$$
	where $dV_g$ is the volume form of $g$ and the norms are induced by $h$ and $g$.
\end{lem}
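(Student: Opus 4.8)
The statement is the classical H\"ormander--Andreotti--Vesentini $L^2$ existence theorem for $\dbar$, and the natural plan is the functional-analytic duality method. Work in the Hilbert spaces $L^2_{(0,0)}(M,\mathcal L)$ and $L^2_{(0,1)}(M,\mathcal L)$ of square-integrable $\mathcal L$-valued forms, viewing $\dbar$ as a densely defined closed operator between them with Hilbert-space adjoint $\dbar^*$. By the standard Riesz-representation reduction, producing $\beta$ with $\dbar\beta=\alpha$ and $\int_M|\beta|^2\,dV_g\le\frac1C\int_M|\alpha|^2\,dV_g$ amounts to the a priori inequality
$$|\langle\alpha,\gamma\rangle|^2\le\frac1C\,\|\alpha\|^2\,\|\dbar^*\gamma\|^2\qquad\text{for all }\gamma\in\mathrm{Dom}(\dbar^*).$$
Decomposing $\gamma=\gamma'+\gamma''$ with $\gamma'\in\ker\dbar$ and $\gamma''\in(\ker\dbar)^\perp$, one uses $\mathrm{Im}\,\dbar\subseteq\ker\dbar$ to see $\gamma''\in\ker\dbar^*$, so $\dbar^*\gamma=\dbar^*\gamma'$, while $\dbar\alpha=0$ forces $\langle\alpha,\gamma\rangle=\langle\alpha,\gamma'\rangle$; hence by Cauchy--Schwarz it suffices to prove the \emph{basic estimate} $\|\gamma'\|^2\le\frac1C\|\dbar^*\gamma'\|^2$ for $\gamma'\in\ker\dbar\cap\mathrm{Dom}(\dbar^*)$.

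For the basic estimate I would invoke the Bochner--Kodaira--Nakano identity on the \kahler manifold $(M,g)$. Identifying an $\mathcal L$-valued $(0,1)$-form with an $(n,1)$-form with values in $\mathcal L\otimes K_M^{-1}$ — this is what turns the curvature of the canonical bundle into the Ricci term — the Nakano inequality gives, for smooth compactly supported $\gamma$,
$$\|\dbar\gamma\|^2+\|\dbar^*\gamma\|^2\ \ge\ \int_M\big\langle[\,-2\pi i\Theta_h+Ric(g),\La\,]\,\gamma,\ \gamma\big\rangle_g\,dV_g,$$
where $[\,\cdot\,,\La\,]$ denotes the commutator with the Lefschetz operator; the action of this commutator on a $(0,1)$-form is exactly the contraction $\langle\,\cdot\,,v\wedge\bar v\rangle$ appearing in the hypothesis, so the right-hand side is $\ge C\|\gamma\|^2$. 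Restricting to $\gamma\in\ker\dbar$ yields $\|\dbar^*\gamma\|^2\ge C\|\gamma\|^2$, the basic estimate for compactly supported forms.

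To remove the compact-support restriction — the only place completeness of $(M,g)$ enters — I would use the classical cutoff argument: completeness provides exhaustion functions $\chi_j\uparrow 1$ with $\sup|d\chi_j|_g\to 0$, and $\chi_j\gamma\to\gamma$ in the graph norm of $\dbar\oplus\dbar^*$ for every $\gamma\in\mathrm{Dom}(\dbar)\cap\mathrm{Dom}(\dbar^*)$; thus $C^\infty_c$ forms are a core, the basic estimate and hence the a priori inequality extend to all of $\mathrm{Dom}(\dbar^*)$, and the duality argument yields $\beta\in L^2_{(0,0)}(M,\mathcal L)$ with $\dbar\beta=\alpha$ weakly and $\|\beta\|^2\le\frac1C\|\alpha\|^2$. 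Since $\dbar^*\dbar$ is elliptic on functions and $\alpha$ is smooth, interior elliptic regularity makes $\beta$ smooth, completing the proof.

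The step I expect to require the most care is not a hard estimate but the bookkeeping in the Bochner--Kodaira--Nakano identity: producing the curvature term in precisely the form of the hypothesis (the sign convention for $\Theta_h$ and the Ricci contribution entering through the $K_M$-twist), together with the completeness/density step that justifies passing from $C^\infty_c$ forms to the full domains of $\dbar$ and $\dbar^*$. As the result is classical, in the write-up one could run through this chain briefly or, as the paper does, simply cite \cite{Tian1990On}.
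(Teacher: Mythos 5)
Your outline is correct: this is the classical H\"ormander--Andreotti--Vesentini $L^2$ existence theorem, and the paper itself offers no proof, only the remark that the lemma is well-known together with a citation, so there is nothing in the paper to diverge from. The chain you describe --- duality reduction to the a priori estimate, the decomposition of $\gamma$ along $\ker\dbar$, the Bochner--Kodaira--Nakano inequality with the $(n,1)$-twist producing the $\mathrm{Ric}$ term, the completeness/cutoff argument to pass from compactly supported forms to the full domain of $\dbar^*$, and elliptic regularity for smoothness --- is exactly the standard argument, and the only points genuinely requiring care are the ones you flag (the sign and normalization conventions in the curvature hypothesis, and the density of $C^\infty_c$ in the graph norm).
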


Now let $\chi\in C^{\infty}_0(U)$ be a non-negative cut-off function, that equals 1 in a neighborhood of the subset $\{p\in U|\nu(p)\leq \delta+c \}$ for some small $c>0$. Then for each $\tilde{s}\in G_\delta$ with $\parallel \tilde{s}\parallel^2=1$, we take the orthogonal projection of $\chi  \tilde{s}$ onto $\hcal_k$. The way to do so is to solve the $\dbar$-equation
$$\dbar v=\dbar \chi \otimes \tilde{s}.$$
And since $|\tilde{s}|$ is $\epk$ on the place where $\dbar\chi\neq 0$, we can apply the lemma above to see that the global holomorphic section $\chi\tilde{s}-v\in \hcal_k$ has $\epk$ $L^2$-mass outside $U$ and the $L^2$-mass of $\chi\tilde{s}-v-\tilde{s}$ within $U$ is also $\epk$.
So for each $f\in \hcal^{\delta k}_k$ with $\parallel f\parallel^2=1$, we have 
$$|<f,\chi\tilde{s}-v>|=\epk $$
 We then take the orthogonal projection of $\chi\tilde{s}-v$ onto $(\hcal^{\delta k}_k)^\perp$ and denote by $\bar{s}$ the section obtained. So we have 
 $$\int_U |\bar{s}-\tilde{s}|^2e^{-k\phi}\omega^m=\epk.$$ 
We denote by $J$ the map we just constructed from $G_\delta$ to $(\hcal^{\delta k}_k)^\perp$. So $J$ is almost an isometry, namely 
$$\parallel J(\tilde{s})\parallel^2=(1+\epk)\parallel \tilde{s}\parallel^2.$$
Therefore, if we start with an orthonormal basis of $G_\delta$, we obtain an almost orthonormal basis of $(\hcal^{\delta k}_k)^\perp$. And then after a Gram-Schmidt process, we get an orthonormal basis. We denote by $\rho_G$ the density function of $G_\delta$, considered as a function on $X$ by extenstion by zero. Since the density function of $(\hcal^{\delta k}_k)^\perp$ is $\rho_k-\rho_k^{\delta k}$, we have 
$$\int_X (\rho_k-\rho_k^{\delta }-\rho_G)\omega^m=\epk.$$
Since, $\rho_G$ has $\epk$ mass outside  $Y_\delta$, we have
$$\int_{X\backslash Y_\delta} (\rho_k-\rho_k^{\delta })\omega^m=\epk.$$
Recall that by a local construction of the peak section $s_p$ for any $p\in X$(see for example\cite{Donaldson15}), $|s_p|$ decays exponentially away from $p$. More precisely, $|s_p(q)|=\epk$, when $d(p,q)>\frac{\log k}{\sqrt{k}}$. When $\delta$ is small enough, $\nu$ is comparable with the distance function $d^2(p,q)$, so if we denote by 
 $Y'_\delta=\{q|\nu(q)\leq \delta+3(m-n)k^{-1/4} \}$, then for any unit $s\in (\hcal^{\delta k}_k)^\perp$, we have
 $$|<s,s_p>|=\epk,$$
for $p\in X\backslash Y'_\delta$. Therefore $|s(p)|=\epk$, $p\in X\backslash Y'_\delta$. Since $\lim_{k\to \infty }Y'_\delta=\{q|\nu(q)\leq \delta \}$, this implies the second part of theorem \ref{main}.

	\bibliographystyle{plain}

\bibliography{references}

\end{document}